\newtheorem{thm}{Theorem}
\newtheorem{cor}[thm]{Corollary}
\newtheorem{lem}[thm]{Lemma}
\newtheorem{prop}[thm]{Proposition}
\theoremstyle{definition}
\newtheorem{ex}[thm]{Example}
\theoremstyle{definition}
\newtheorem{defn}[thm]{Definition}
\theoremstyle{definition}
\newtheorem{rem}[thm]{Remark}
\theoremstyle{definition}
\def\Q{\mathbb Q}
\def\C{\mathbb C}
\def\R{\mathbb R}
\def\Z{\mathbb Z}
\def\dim{\operatorname{dim}}
\def\supp {\mathrm{supp}}
\def\ini{\operatorname{in}}
\def\ord {\mathrm{ord}}
\def\O{\mathcal O}
\def\m{\mathbf m}
\def\bfu{\mathbf u}
\def\bfv{\mathbf v}
\def\e{\mathbf e}
\def\l0{\lambda_0}
\def\LL{\textnormal{\texttt{L}}}
\def\geq{\geqslant}
\def\leq{\leqslant}
\def\lct{\operatorname{lct}}
\def\LM{\operatorname{LM}}
\def\LM{\operatorname{\textsc{lm}}}
\def\Mon{\operatorname{Mon}}
\def\DP{\operatorname{DP}}
\def\*{\color{red}\blacksquare\hspace{-0.1cm}\blacksquare\hspace{-0.1cm}\blacksquare}
\subjclass[$2010$ Mathematics Subject Classification]{Primary
13H15; Secondary 32S05, 14B05}
\begin{document}
\author{Carles Bivi\`a-Ausina}
\title[Log canonical threshold and diagonal ideals]{Log canonical threshold and diagonal ideals}

\address{Institut Universitari de Matemàtica Pura i Aplicada, Universitat Politècnica de València,
Camí de Vera s/n, 46022 València, Spain}

\email{carbivia@mat.upv.es}


\begin{abstract}
We characterize the ideals $I$ of $\O_n$ of finite colength whose integral closure is equal to
the integral closure of an ideal generated by pure monomials. This characterization, which
is motivated by an inequality proven by Demailly and Pham \cite{DP}, is given in terms
of the log canonical threshold  of $I$ and the sequence of mixed multiplicities of $I$.
\end{abstract}

\maketitle

\section{Introduction}


Let $\O_n$ denote the ring of analytic function germs $f:(\C^n,0)\to \C$. Let $I$ be an ideal of $\O_n$ and
let $g_1,\dots, g_r$ be a generating system of $I$. The {\it log canonical threshold of $I$}, denoted by $\lct(I)$, is defined as the supremum of
those $s\in\R_{>0}$ such that the function $(\vert g_1\vert^2+\cdots+\vert g_r\vert^2)^{-s}$ is locally integrable around $0$.
This number, which does not depend on the chosen generating system of $I$, is always rational and
has a deep relation with other invariants (see for instance  \cite{Aluffi}, \cite{DemUppsala} or \cite{dFEM1}). 
Moreover, the log canonical threshold can be
characterized in several ways and is an object
of interest in algebraic geometry, commutative algebra an complex analytic geometry. We refer to \cite{L}, \cite{Mu2} and \cite{Totaro}
for properties and fundamental results about this number. The {\it Arnold multiplicity of $I$}, 
denoted by $\mu(I)$, is defined as $\mu(I)=\frac{1}{\lct(I)}$.


If no confusion arises, we denote by $\m$ the maximal ideal of $\O_n$. If $i\in\{1,\dots, n\}$,
then $e_i(I)$ will denote the mixed multiplicity $e(I,\dots, I,\m,\dots,\m)$, where $I$ is repeated
$i$ times and $\m$ is repeated $n-i$ times (we refer to \cite[\S17]{HS}, \cite{Rees} and \cite{Sw} for the definition and basic properties
of mixed multiplicities). We recall that $e_1(I)=\ord(I)$, where $\ord(I)=\max\{r\geq 1: I\subseteq \m^r\}$, and $e_n(I)=e(I)$, where
$e(I)$ denotes the Samuel multiplicity of $I$.


If $u$ is the plurisubharmonic function given by
$u=\max_j\log\vert g_j\vert$, then $e_i(I)=L_i(u)$, where $L_i(u)$ denotes
the Lelong number of the current $(dd^cu)^i$ at $0$, for $i=1,\dots, n$
(see for instance the proof of \cite[Corollary 4.2]{RashkJGA} or \cite{DemUppsala}).
Therefore, by Section 3.1 of the article \cite{DP} of Demailly and Pham, if $I$ denotes an ideal of $\O_n$ of finite colength
generated by monomials, then
\begin{equation}\label{dp}
\frac{1}{e_1(I)}+\frac{e_1(I)}{e_2(I)}+\cdots+\frac{e_{n-1}(I)}{e_n(I)}\leq \lct(I).
\end{equation}
Let us denote by $\DP(I)$ the sum that appears in the left hand side of (\ref{dp}).
If $I$ is an arbitrary ideal of $\O_n$ of finite colength, then we define $\DP(I)$ in the same way.


In Section \ref{orderings} we show two results relating the mixed multiplicities of $I$
with the initial ideals of the powers of $I$ with respect to a specific local monomial ordering
(the negative lexicographical order).
As a direct application of these results and relation (\ref{dp}) we obtain
that $\DP(I)\leq \lct(I)$, for any ideal $I\subseteq \O_n$ of finite colength (see Remark \ref{resolem}).
This article is motivated by the question of characterizing when equality $\DP(I)=\lct(I)$ holds.


We recall that, for any ideal $I\subseteq \O_n$, the following chain of inequalities holds
\begin{equation}\label{vivaHS}
\frac{1}{e_1(I)}\geq\frac{e_1(I)}{e_2(I)}\geq\cdots\geq\frac{e_{n-1}(I)}{e_n(I)},
\end{equation}
as can be seen, for instance, in \cite[Theorem 17.7.2]{HS}, \cite{RS} or \cite[p.\,41]{Tessierappendix}.
As a consequence of the inequality relating the arithmetical and the geometrical means of $n$ positive real numbers, we
immediately obtain that, if $I$ is an ideal of $\O_n$ of finite colength, then
\begin{equation}\label{meollo}
\frac{n}{e(I)^{1/n}}=n\left(\frac{1}{e_1(I)}\frac{e_1(I)}{e_2(I)}\cdots\frac{e_{n-1}(I)}{e_n(I)}\right)^{1/n}\leq
\frac{1}{e_1(I)}+\frac{e_1(I)}{e_2(I)}+\cdots+\frac{e_{n-1}(I)}{e_n(I)}\leq \lct(I).
\end{equation}
Then we have that $n\mu(I)^{1/n}\leq\lct(I)$ and equality holds if and only
if $\frac{e_1(I)}{e_2(I)}=\cdots=\frac{e_{n-1}(I)}{e_n(I)}$. It is immediate to see that this last
condition is equivalent to saying that $e_i(I)=e_1(I)^i$, for all $i=1,\dots, n$, which in turn is equivalent to the condition
$e(I)=e_1(I)^{n}=\ord(I)^n$, by (\ref{vivaHS}). We have that $I\subseteq \m^{\ord(I)}$, then the condition $e(I)=\ord(I)^n$
is equivalent to saying that $\overline I=\m^{\ord(I)}$, by the Rees' Multiplicity Theorem
(see for instance \cite[p.\,147]{HIO} or \cite[p.\,222]{HS}). Therefore it follows that $n\mu(I)^{1/n}=\lct(I)$
if and only if $\overline I=\m^{\ord(I)}$. This last equivalence was proven previously in \cite[Theorem 1.4]{dFEM1} by using another procedure.


Inspired by this result, we approach the problem of characterizing the equality $\DP(I)=\lct(I)$ by means of an
expression for the integral closure of $I$. For this purpose, we introduce a class of ideals that we call
{\it diagonal ideals} (see Definition \ref{elsdiagonals}).
We characterize this class in Theorem \ref{resprinc}. This theorem is supported by Corollary \ref{propxula}, where we
show a result analogous to Rees' Multiplicity Theorem using $\DP(I)$ instead of $e(I)$. As we will see (Example \ref{contraex}),
diagonal ideals are strictly contained
in the class of ideals $I\subseteq\O_n$ of finite colegth for which the equality $\DP(I)=\lct(I)$ holds.



\section{Local monomial orderings and mixed multiplicities}\label{orderings}


Let us fix a coordinate system $x_1,\dots, x_n\in \C^n$. If $\alpha=(\alpha_1,\dots,\alpha_n)\in\Z^n_{\geq 0}$, then
we denote the monomial $x_1^{\alpha_1}\cdots x_n^{\alpha_n}$ by $x^\alpha$. Let $\Mon_n=\{x^\alpha: \alpha\in\Z^n_{\geq 0}\}$.
Here we recall some definitions taken from \cite[Section 1.2]{GP} (see also \cite[Chapter 4, \S 3]{CLO}).
A {\it monomial ordering} in $\Mon_n$ is a total ordering $>$ on the set $\Mon_n$ such that $x^\alpha>x^\beta$ implies
$x^\gamma x^\alpha>x^\gamma x^\beta$, for all $\alpha, \beta,\gamma\in\Z^n_{\geq 0}$.

Let $>$ be a monomial ordering in $\Mon_n$. We say that $>$ is {\it local} when $1>x^\alpha$, for all $\alpha\in \Z^n_{\geq 0}$.
In the sequel we will consider the local monomial ordering $>$ given by $x^{\alpha}> x^\beta$ if and only if
there exists some $i\in \{1,\dots, n\}$ such that $(\alpha_1,\dots, \alpha_{i-1})
=(\beta_1,\dots, \beta_{i-1})$ and $\alpha_i<\beta _i$, where $\alpha,\beta\in\Z^n_{\geq 0}$. In particular $x_n> x_{n-1}>\cdots >x_1$.
This monomial ordering is known as the {\it negative
lexicographical order} (see \cite[p.\,14]{GP}). 

If $f\in\O_n$, $f\neq 0$, let $f=\sum_k a_kx^k$ be the Taylor expansion of $f$ around the origin.
Then we define the {\it support} of $f$, denoted by $\supp(f)$, as the set of those $k\in\Z^n_{\geq 0}$ such that
$a_k\neq 0$. Therefore we denote by $\ini(f)$ the maximum of the monomials $x^k$, $k\in\supp(f)$, with respect to
the order $>$. Let us remark that, by the definition
of the negative lexicographical order, $\ini(f)$ exists. We will refer to $\ini(f)$ as the {\it initial monomial of $f$}
(in \cite{CLO} this monomial is called the {\it leading monomial of $f$} and is denoted by $\LM(f)$).

If $I$ is an ideal of $\O_n$, then we define the {\it initial ideal of $I$}, which we will denote by $\ini(I)$, as the ideal of $\O_n$
generated by all monomials $\ini(f)$ such that $f\in I$.
If $I$ has finite colength, then $\ini(I)$ has also finite colength and
$\ini(I)$ satisfies the following fundamental relation:
\begin{equation}\label{essencialini}
\dim_\C\frac{\O_n}{I}=\dim_\C\frac{\O_n}{\ini(I)}.
\end{equation}
The above result follows from \cite[Theorem 4.3, p.\,177]{CLO} (see also \cite[Corollary 7.5.6]{GP}). However, the ideals
$I$ and $\ini(I)$ do not have the same multiplicity in general, as we see in the following easy example.

\begin{ex}\label{surprise}
Let us consider the ideal $I=\langle x+y^2, y^3\rangle
\subseteq\O_2$. Using Singular \cite{Singular} we have that $\ini(I)=\langle y^2, xy, x^2\rangle$ and therefore $e(I)=3$ and $e(\ini(I))=4$.
We also observe that $e_1(I)=1$ and $e_1(\ini(I))=2$.
\end{ex}

\begin{prop}\label{semicont}
Let $I$ be an ideal of $\O_n$ of finite colength. Then $e_j(I)\leq e_j(\ini(I))$, for all $j=1,\dots, n$, and $\lct(\ini(I))\leq \lct(I)$.
\end{prop}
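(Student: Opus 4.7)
The plan is to exhibit $\ini(I)$ as the special fibre of a flat one-parameter Gr\"obner degeneration of $I$ and then to invoke semicontinuity of the two invariants along that family. Since $I$ has finite colength, only finitely many monomials enter a Mora-Gr\"obner basis of $I$ with respect to $>$, and Gr\"obner-fan theory for local orderings produces a weight vector $w=(w_1,\dots,w_n)\in\Z_{>0}^n$ with $w_1\gg\cdots\gg w_n$ for which the $w$-initial ideal coincides with $\ini(I)$. Given such a basis $g_1,\dots,g_r$, I would set
\[
\tilde g_k(x,t)=t^{-\ord_w(g_k)}\,g_k\bigl(t^{w_1}x_1,\dots,t^{w_n}x_n\bigr)
\]
and define $\F_t=\langle \tilde g_1(\cdot,t),\dots,\tilde g_r(\cdot,t)\rangle\subseteq\O_n$ for $t$ in a small disc. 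The family is flat over $\C[t]$ by the standard Gr\"obner flatness criterion, $\F_t\cong I$ for $t\neq 0$ (via the analytic rescaling $x_i\mapsto t^{w_i}x_i$), and $\F_0=\ini(I)$.

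For the first inequality I would apply the upper semicontinuity of each mixed multiplicity in this flat family of $\m$-primary ideals, obtaining $e_j(I)=e_j(\F_t)\leq e_j(\F_0)=e_j(\ini(I))$ for $j=1,\dots,n$. A more elementary route avoiding semicontinuity starts from the identity in (\ref{essencialini}), the equality $\ini(\m)=\m$, and the multiplicativity $\ini(fg)=\ini(f)\ini(g)$ that is valid for any monomial order, which together yield $\ini(I^a\m^b)\supseteq\ini(I)^a\m^b$ and thus
\[
\dim_\C\O_n/(I^a\m^b)\leq\dim_\C\O_n/(\ini(I)^a\m^b)
\]
for all $a,b\geq 0$. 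The individual mixed multiplicities then follow by identifying them with the top-degree coefficients of the bivariate Hilbert-Samuel polynomial, combined with an inductive reduction on $n$ via superficial elements of $\m$ to isolate one $e_j$ at a time.

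The $\lct$ inequality I would obtain from the lower semicontinuity of $\lct$ in flat families, as in the circle of results around Demailly-Koll\'ar and de Fernex-Ein-Mustata: along $\F_t$ one has $\lct(\ini(I))=\lct(\F_0)\leq\lct(\F_t)=\lct(I)$. The main obstacle I expect is making the family $\F_t$ rigorous in the analytic local ring $\O_n$, since Gr\"obner theory there requires Mora's normal form and some care to transit between the analytic, formal, and polynomial settings, and one must quote the semicontinuity results in a form tight enough for this analytic setup; once that infrastructure is in place both inequalities drop out immediately.
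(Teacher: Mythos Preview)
Your plan is correct and follows essentially the same route as the paper: build the flat Gr\"obner degeneration from $I$ to $\ini(I)$ and read off both inequalities from semicontinuity. The paper constructs the family via \cite[Corollary 7.4.6 and 7.5.2]{GP} (using that finite colength gives polynomial generators) rather than a weight vector, but this is cosmetic; the $\lct$ step is identical, citing Demailly--Koll\'ar/Lazarsfeld.

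The one substantive difference is in the $e_j$ step. You invoke ``upper semicontinuity of mixed multiplicities'' as a black box; the paper instead proves it on the spot by writing $e_j(J_0)=e\bigl(J_0\,\O_n/\langle h_{j+1},\dots,h_n\rangle\bigr)$ for generic linear forms, applying upper semicontinuity of the ordinary Samuel multiplicity along the family (Gaffney--Kleiman, Lipman), and then bounding back by $e_j(J_t)$ via \cite[Theorem 17.4.9]{HS}. This is exactly the ``superficial element'' reduction you allude to, but applied inside the semicontinuity argument rather than to the colength route.

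Your alternative ``elementary route'' via $\dim_\C\O_n/(I^a\m^b)\leq\dim_\C\O_n/(\ini(I)^a\m^b)$ has a genuine gap as stated: domination of the bivariate Hilbert--Samuel function does \emph{not} by itself force termwise inequalities among the top-degree coefficients $e_j$ (e.g.\ $(a-b)^2\geq 0$ yet the $ab$ coefficient drops). To isolate a single $e_j$ you would quotient by generic linear forms, but $\ini(\cdot)$ does not commute with such quotients, so the colength comparison is lost after reduction. The paper's argument sidesteps this by reducing to Samuel multiplicity \emph{before} appealing to semicontinuity, which is the clean way to close this.
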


\begin{proof}
Let us consider the coordinates $(x_1,\dots, x_n,t)$ in $\C^{n+1}$. Since we suppose that $I$ has finite colength, then $I$ admits
a generating system formed by polynomials. In particular, by \cite[Corollary 7.4.6]{GP} and \cite[Corollary 7.5.2]{GP},
there exists an ideal $J\subseteq \O_{n+1}$ generated by homogeneous polynomials verifying the following properties:
\begin{enumerate}
\item $J_0=\ini(I)$ and $J_1=I$, where we denote
by $J_t$ the ideal of $\O_n$ obtained by fixing the variable $t$ in each element of $J$;
\item $\O_{n+1}/J$ is a flat $\C[t]$-algebra;
\item the rings $\O_n/J_t$ and $\O_n/I$ are isomorphic, for all $t\in\C\smallsetminus\{0\}$.
\end{enumerate}
By the lower semicontinuity of the log canonical threshold (see \cite{DK} or \cite[Corollary 9.5.39]{L}), we have
$\lct(J_0)\leq \lct(J_t)=\lct(I)$, for all $t$ small enough, $t\neq 0$, where the equality $\lct(J_t)=\lct(I)$
follows by the existence of a ring isomorphism $\O_n/J_t\simeq \O_n/I$, for all $t\in \C\smallsetminus\{0\}$.


Let us fix an integer $j\in\{1,\dots, n\}$.
We recall that $e_j(J_0)=e(J_0,\dots, J_0,\m,\dots,\m)$, where $J_0$ is repeated $j$ times and $\m$ is repeated $n-j$ times.
Hence, by \cite[Theorem 17.4.9]{HS} (see also \cite[Corollaire 2.2]{Cargese}), 
the mixed multiplicity $e_j(I)$ is expressed as
\begin{equation}\label{glforms}
e_j(J_0)=e\left(J_0\frac{\O_n}{\langle h_{j+1}, \dots, h_n\rangle}\right),
\end{equation}
for generic linear forms $h_{j+1}, \dots, h_n$ in $\C[x_1,\dots, x_n]$ (this set of linear forms is empty when $j=n$). 

Then, let us fix linear forms $h_{j+1}, \dots, h_n\in\C[x_1,\dots, x_n]$ such that relation (\ref{glforms}) holds.
By the upper semicontinuity of Samuel multiplicity (see \cite[p.\,547]{GK} or \cite[p.\,126]{Lipman}) we have
\begin{equation}\label{ejs}
e_j(J_0)\geq e\left(J_t\frac{\O_n}{\langle h_{j+1}, \dots, h_n\rangle}\right)\geq e_j(J_t)
\end{equation}
where the second inequality follows from \cite[Theorem 17.4.9]{HS}.

The existence of a ring isomorphism $\O_n/J_t\simeq \O_n/I$, for all $t\in \C\smallsetminus\{0\}$, implies
that there exists a biholomorphism $\varphi_t:(\C^n,0)\to (\C^n,0)$
such that $\varphi_t^*(I)=J_t$ (see \cite[p.\,16]{Fischer} o \cite[p.\,57]{GLS}). In particular, we obtain that $e_j(I)=e_j(J_t)$, for all $t\neq 0$.
Then, since $J_0=\ini(I)$, we have that $e_j(\ini(I))\geq e_j(I)$, for all $j=1,\dots, n$, by virtue of (\ref{ejs}).
\end{proof}

Let $\LL\subseteq\{1,\dots, n\}$, $\LL\neq\emptyset$. We define $\C^n_\LL=\{x\in\C^n: x_i=0,\,\textrm{for all $i\notin\LL$}\}$ and
we denote by $\pi_\LL$ the natural projection $\C^n\to \C^n_\LL$. Let us denote by $\O_{n, \LL}$ the subring of $\O_n$
formed by all functions germs of $\O_n$ depending at most on the variables $x_i$ with $i\in \LL$.
Let $f\in\O_n$ and let us suppose that the Taylor expansion of $f$ around the origin is given by $f=\sum_ka_kx^k$.
Then we denote by $f_\LL$ the sum of all terms $a_kx^k$ such that $k\in\supp(f)\cap \R^n_\LL$. 
If $J$ is an ideal of $\O_n$ then we denote by $J_{\LL}$ the ideal of $\O_{n,\LL}$ generated by all elements $f_\LL$,
where $f\in J$.

\begin{lem}\label{lesLfora}
Let $J$ be an ideal of $\O_n$ of finite colength and let $\LL=\{j,\dots,n\}$, for some $j\in\{1,\dots, n\}$. Then
$$
\ini(J_\LL)=\ini(J)_\LL.
$$
\end{lem}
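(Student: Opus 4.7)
The plan is to show that both sides of the claimed equality agree with the same explicit $\O_{n,\LL}$-ideal, by exploiting the precise structure of the negative lexicographical order. The crucial observation I would establish first is that, for this order, $\ini(f)\in\O_{n,\LL}$ if and only if $f_\LL\neq 0$, and in that case $\ini(f)=\ini(f_\LL)$. Indeed, the negative lex order first minimizes $\alpha_1$, then $\alpha_2$, and so on, so the $>$-maximal support element of $f$ has its first $j-1$ coordinates equal to zero precisely when some support element of $f$ does, which is exactly the condition $f_\LL\neq 0$; and among the monomials in $\O_{n,\LL}$, the restriction of $>$ coincides with the negative lex order on $\Mon_{n,\LL}$.

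Granting this, I would record the harmless fact that $J_\LL=\{f_\LL:f\in J\}$: this set is already closed under multiplication by elements of $\O_{n,\LL}$, because $(hf)_\LL=h\cdot f_\LL$ whenever $h\in\O_{n,\LL}$ (any term of $h$ has its first $j-1$ coordinates equal to zero). It follows that the generators of $\ini(J_\LL)$ are exactly the monomials $\ini(f_\LL)$ for $f\in J$ with $f_\LL\neq 0$.

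For the other side, since $\ini(J)$ is a monomial ideal, a short direct computation shows that $\bigl(h\cdot\ini(f)\bigr)_\LL$ equals $h_\LL\cdot\ini(f)$ when $\ini(f)\in\O_{n,\LL}$ and vanishes otherwise. Hence $\ini(J)_\LL$ is the $\O_{n,\LL}$-ideal generated by the set $\{\ini(f):f\in J,\ \ini(f)\in\O_{n,\LL}\}$. By the key observation, this generating set coincides with $\{\ini(f_\LL):f\in J,\ f_\LL\neq 0\}$, giving the equality.

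The only real content is the key observation about initial monomials under the negative lex order; everything else is formal bookkeeping about how the $(\,\cdot\,)_\LL$ operator interacts with products and with monomial ideals. I do not anticipate any serious obstacle, although one must be a little careful to invoke the specific ordering of the variables $x_n>\cdots>x_1$, since the statement would fail for an arbitrary local monomial order.
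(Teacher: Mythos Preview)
Your proposal is correct and follows essentially the same approach as the paper: both rest on the observation that, for the negative lexicographical order with $\LL=\{j,\dots,n\}$, one has $\ini(f)\in\O_{n,\LL}$ if and only if $f_\LL\neq 0$, and in that case $\ini(f)=\ini(f_\LL)$. The paper embeds this observation directly into two mutual inclusions, whereas you state it upfront and add explicit bookkeeping (e.g.\ that $J_\LL=\{f_\LL:f\in J\}$ and the description of generators of $\ini(J)_\LL$), but the mathematical content is the same.
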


\begin{proof}
If $j=1$, there is nothing to prove, so let us suppose that $j>1$.
Let $f\in J$, such that $\ini(f)_\LL\neq 0$. In particular, it follows that $f_\LL\neq 0$ and $\supp(\ini(f))\subseteq \supp(f_\LL)$,
that is, $\ini(f)_\LL=\ini(f)=\ini(f_\LL)$. Then $\ini(J_\LL)\supseteq \ini(J)_\LL$.

On the other hand, let $f\in J$ such that $f_\LL\neq 0$. Then there exists
some element $k\in\supp(f)$ such that $k_1=\cdots=k_{j-1}=0$.
Hence $x^{k}>x^{k'}$, for all $k'\in\supp(f)$ such that
$k'_i\neq 0$, for some $i\notin \LL$, by the definition of the negative lexicographical order. In particular
$\supp(\ini(f))\subseteq \supp(f)\cap \R^n_\LL$ and hence
$\ini(f)=\ini(f_\LL)$. In particular $\ini(f_\LL)=\ini(f)_\LL$. Therefore $\ini(J_\LL)\subseteq\ini(J)_\LL$.
\end{proof}

If $\varphi:\C^n\to \C^n$ is a linear change of coordinates and $J$ is an ideal of $\O_n$,
then we denote by $\varphi^*(J)$ the ideal of $\O_n$ generated by the elements $g\circ\varphi$, where $g\in J$.

\begin{thm}\label{mustatagin}
Let $I$ be an ideal of $\O_n$ of finite colength. Then, for all $j\in\{1,\dots, n\}$, we have
\begin{equation}\label{vivaej}
e_j(I)=\lim_{t\to +\infty}\frac{e_j\left(\ini(\varphi^*(I)^t)\right)}{t^j}
\end{equation}
for a generic linear change of coordinates $\varphi:\C^n\to \C^n$.
\end{thm}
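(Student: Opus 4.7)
The plan is to prove (\ref{vivaej}) by establishing that both the $\liminf$ and the $\limsup$ of $e_j(\ini(\varphi^*(I)^t))/t^j$ equal $e_j(I)$. Writing $K=\varphi^*(I)$, a linear change of coordinates is a biholomorphism at the origin, so $e_i(K)=e_i(I)$ for every $i$. The lower bound is immediate from Proposition \ref{semicont} and the multiplicativity $e_j(J^t)=t^j e_j(J)$: for every $t\geq 1$
\[
e_j(\ini(K^t))\geq e_j(K^t)=t^j e_j(K)=t^j e_j(I),
\]
hence $\liminf_t e_j(\ini(K^t))/t^j\geq e_j(I)$.

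For the reverse inequality, I would reduce to a Samuel-multiplicity statement in a smaller regular local ring. Let $\LL=\{n-j+1,\dots,n\}$, so that $\O_{n,\LL}$ is regular of dimension $j$. The specialization inequality that appears on the right-hand side of (\ref{ejs}), applied to the ideal $\ini(K^t)$ with the (possibly non-generic) parameters $x_1,\dots,x_{n-j}$, gives
\[
e_j(\ini(K^t))\leq e\!\left(\ini(K^t)\,\frac{\O_n}{\langle x_1,\dots,x_{n-j}\rangle}\right)=e(\ini(K^t)_\LL)=e(\ini(K_\LL^t)),
\]
where the last equality combines Lemma \ref{lesLfora} with the observation that $(K^t)_\LL=(K_\LL)^t$: the map $f\mapsto f_\LL$ is nothing but the ring homomorphism $\O_n\to\O_{n,\LL}$ that sets $x_i=0$ for $i\notin\LL$. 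For generic $\varphi$, the forms $\varphi^*(x_1),\dots,\varphi^*(x_{n-j})$ are generic with respect to $I$, so (\ref{glforms}) yields $e(K_\LL)=e_j(I)$ inside $\O_{n,\LL}$.

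It therefore suffices to prove the Samuel-multiplicity version: for any ideal $J$ of finite colength in a $d$-dimensional regular local ring $R$,
\[
\lim_{t\to\infty}\frac{e(\ini(J^t))}{t^d}=e(J).
\]
Setting $M_t=\ini(J^t)$, relation (\ref{essencialini}) together with the Hilbert-Samuel expansion gives
\[
\dim_\C R/M_t=\dim_\C R/J^t=\frac{e(J)\,t^d}{d!}+O(t^{d-1}).
\]
On the other hand $M_t$ is a monomial ideal, so $\dim_\C R/M_t$ counts the lattice points of $\R^d_{\geq 0}$ lying outside the staircase of $M_t$, a set that contains the region $C_t=\R^d_{\geq 0}\setminus\Gamma_+(M_t)$ of Euclidean volume $e(M_t)/d!$. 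Since $M_t\supseteq\ini(J)^t$, one has $C_t\subseteq t\cdot C_1$ with $C_1=\R^d_{\geq 0}\setminus\Gamma_+(\ini(J))$ a fixed bounded polyhedron. A standard lattice-points-vs-volume estimate then yields $\#(\Z^d\cap C_t)=\mathrm{vol}(C_t)+O(t^{d-1})$, so that
\[
\frac{e(M_t)}{d!}=\mathrm{vol}(C_t)\leq \dim_\C R/M_t+O(t^{d-1})=\frac{e(J)\,t^d}{d!}+O(t^{d-1}).
\]
Dividing by $t^d$ and combining with $e(M_t)\geq t^d e(J)$ (from Proposition \ref{semicont} applied in $R$) gives $e(M_t)/t^d\to e(J)$. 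Applied to $J=K_\LL$ inside $R=\O_{n,\LL}$, this finishes the proof.

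I expect the main difficulty to be the uniform lattice-points-vs-volume bound: the number of minimal generators of $M_t$ may grow with $t$, so a naive Ehrhart-type estimate would not suffice. What saves the argument is the inclusion $C_t\subseteq t\cdot C_1$, which forces the diameter of $C_t$ to be $O(t)$ and confines its boundary to a set with $(d-1)$-dimensional measure $O(t^{d-1})$, yielding the required uniform $O(t^{d-1})$ error term.
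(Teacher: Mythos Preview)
Your argument is correct and follows exactly the same sandwich as the paper: bound $e_j(\ini(K^t))$ from below by $t^j e_j(I)$ via Proposition~\ref{semicont}, and from above by $e(\ini(K^t_\LL))$ via Lemma~\ref{lesLfora} together with the specialization inequality, then show that this upper bound also tends to $e_j(I)$ after dividing by $t^j$. The only point of divergence is that the paper invokes Musta\c{t}\u{a}'s result \cite[Corollary~1.13]{Mu1} (see also \cite{Cut}) for the limit $\lim_t e(\ini(J^t))/t^d=e(J)$, whereas you supply a direct proof from $\dim_\C R/\ini(J^t)=\dim_\C R/J^t$ and the Newton--volume formula $e(M_t)=d!\operatorname{vol}(C_t)$.

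Your direct argument works, but the lattice-point estimate you worry about in the last paragraph is unnecessary. For any monomial ideal $M$ of finite colength, every unit cube $k+[0,1)^d$ with $x^k\in M$ lies inside $\Gamma_+(M)$ (since $\Gamma_+(M)+\R^d_{\geq 0}=\Gamma_+(M)$), so $C_t$ is covered by the $\dim_\C R/M_t$ unit cubes indexed by the standard monomials, giving the \emph{exact} inequality $\operatorname{vol}(C_t)\leq \dim_\C R/M_t$. Hence $e(M_t)/d!\leq \dim_\C R/J^t=e(J)t^d/d!+O(t^{d-1})$ with no appeal to Ehrhart-type bounds, and the squeeze is complete. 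This shortcut makes your substitute for \cite{Mu1} entirely elementary.
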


\begin{proof}
By \cite[Theorem 17.4.9]{HS}, there exist generic linear forms $h_1,\dots, h_n\in\C[x_1,\dots, x_n]$ such that
$$
e_j(I)=e\left(I\frac{\O_n}{\langle h_{1},\dots, h_{n-j}\rangle}\right)
$$
for all $j=1,\dots, n$ (where we consider that this set of linear forms is empty when $j=n$).
Let $\varphi:\C^n\to \C^n$ be the linear change of coordinates such that $h_i\circ \varphi=x_i$, for all
$i=1,\dots, n$. Let us denote by $J$ the ideal $\varphi^*(I)$. Then
\begin{equation}\label{prepare}
e_j(I)=e\left(I\frac{\O_n}{\langle h_{1},\dots, h_{n-j}\rangle}\right)=e\left(J\frac{\O_n}{\langle x_{1},\dots, x_{n-j}\rangle}\right)=e(J_{\LL})
\end{equation}
where $\LL=\{n-j+1,\dots, n\}$ and $e(J_{\LL})$ denotes the Samuel multiplicity of $J_\LL$ in the ring $\O_{n,\LL}$.
By \cite[Corollary 1.13]{Mu1} (see also \cite[Theorem 1.1]{Cut}) we have that
$$
e(J_{\LL})=\lim_{t\to +\infty}\frac{e\left(\ini(J^t_\LL)\right)}{t^j}
$$
where $\ini(J^t_\LL)$ is the initial ideal of $J^t_\LL$ with respect to the negative lexicographical ordering in the
monomials of $\O_{n,\LL}$, for all $t\in\Z_{\geq 1}$.
By Lemma \ref{lesLfora} we have $e(\ini(J^t_\LL))=e\left(\ini(J^t)_\LL\right)$. Moreover
$e(\ini(J^t)_\LL)\geq e_{j}(\ini(J^t))\geq e_{j}(J^t)$, where the first inequality follows from  \cite[Theorem 17.4.9]{HS}
and the second inequality is an application of Proposition \ref{semicont}. Putting this information together we obtain
the following chain of inequalities:
$$
e\left(\ini(J^t_\LL)\right)=e\left(\ini(J^t)_\LL\right)\geq e_{j}\left(\ini(J^t)\right)\geq e_{j}(J^t)=t^je_j(J).
$$
Then, dividing each term of the previous inequalities by $t^j$ and taking limits,
we arrive to
\begin{align*}
e_j(I)=e(J_{\LL})&=\lim_{t\to +\infty}\frac{e\left(\ini(J^t_\LL)\right)}{t^j}=\lim_{t\to +\infty}\frac{e\left(\ini(J^t)_\LL\right)}{t^j}\\
& \geq \lim_{t\to +\infty}\frac{e_j\left(\ini(J^t)\right)}{t^j}= \lim_{t\to +\infty}\frac{e_j\left(\ini(\varphi^*(I)^t)\right)}{t^j}\\
&\geq \lim_{t\to +\infty}\frac{e_j(\varphi^*(I)^t)}{t^j}=e_j(\varphi^*(I))=e_j(I).
\end{align*}
Then the result follows.
\end{proof}

\begin{rem}
By the argument of the proof of the previous result, if we fix an index $j\in\{1,\dots, n-1\}$, and
$h_1,\dots, h_{n-j}$ are linear forms of $\C[x_1,\dots, x_n]$ such that $e_j(I)$ coincides with the multiplicity of $I$
in the quotient ring $\O_n/\langle h_1,\dots, h_{n-j}\rangle$, then relation (\ref{vivaej}) holds by taking
$\varphi:\C^n\to \C^n$ as any linear change of coordinates such that $h_i\circ \varphi=x_i$, for all $i=1,\dots, j$.
\end{rem}

\begin{cor}\label{limDPs}
Let $I$ be an ideal of finite colength of $\O_n$. Then
\begin{equation}\label{nadal2015}
\DP(I)=\lim_{t\to+\infty} t\DP\left(\ini(\varphi^*(I)^t)\right).
\end{equation}
for a generic linear change of coordinates $\varphi:\C^n\to \C^n$.
\end{cor}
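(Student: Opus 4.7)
The plan is to reduce this directly to Theorem \ref{mustatagin} term-by-term. First I would choose a single generic linear change of coordinates $\varphi:\C^n\to\C^n$ that simultaneously makes the formula $e_j(I)=\lim_{t\to\infty} e_j(\ini(\varphi^*(I)^t))/t^j$ valid for every $j\in\{1,\dots,n\}$; this is legitimate because the proof of Theorem \ref{mustatagin} produces one tuple $h_1,\dots,h_n$ of linear forms that is generic enough to compute all mixed multiplicities of $I$ simultaneously, and $\varphi$ is the change of variables sending $h_i$ to $x_i$.

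Set $J_t=\ini(\varphi^*(I)^t)$. Since $I$ has finite colength, so does $\varphi^*(I)^t$ and hence, by \eqref{essencialini}, so does $J_t$; in particular every $e_j(J_t)$ is a positive integer and $\DP(J_t)$ is well defined. Next I would rewrite the target expression with the convention $e_0(\,\cdot\,)=1$, namely
\begin{equation*}
t\,\DP(J_t)=\sum_{j=1}^n\frac{t\,e_{j-1}(J_t)}{e_j(J_t)}=\sum_{j=1}^n\frac{e_{j-1}(J_t)/t^{j-1}}{e_j(J_t)/t^j}.
\end{equation*}
By Theorem \ref{mustatagin}, for each $j$ the numerator $e_{j-1}(J_t)/t^{j-1}$ converges to $e_{j-1}(I)$ and the denominator $e_j(J_t)/t^j$ converges to the positive number $e_j(I)$ as $t\to+\infty$. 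Therefore each summand converges to $e_{j-1}(I)/e_j(I)$, and summing the $n$ terms yields $\DP(I)$, which is \eqref{nadal2015}.

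There is no real obstacle here beyond bookkeeping: the crux is the observation that multiplying by $t$ exactly compensates for the extra factor of $t$ that appears when dividing a ratio of the form $e_{j-1}(J_t)/e_j(J_t)$ by the correct powers $t^{j-1}$ and $t^{j}$ prescribed by Theorem \ref{mustatagin}. The only minor point to be careful about is that the denominators $e_j(I)$ are strictly positive, which holds because $I$ has finite colength; this guarantees that taking limits in the quotient is legal and requires no uniform bounds.
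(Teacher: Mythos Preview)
Your proof is correct and follows essentially the same approach as the paper: rewrite $t\,\DP(\ini(\varphi^*(I)^t))$ so that each summand is a quotient of $e_{j-1}(\ini(\varphi^*(I)^t))/t^{j-1}$ by $e_j(\ini(\varphi^*(I)^t))/t^{j}$, and then apply Theorem~\ref{mustatagin} termwise. Your explicit remark that a single generic $\varphi$ works simultaneously for all $j$ (because the same tuple $h_1,\dots,h_n$ computes every $e_j(I)$ in the proof of Theorem~\ref{mustatagin}) is a useful clarification that the paper leaves implicit.
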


\begin{proof} Let us fix a generic change of coordinates $\varphi:\C^n\to \C^n$ and let us denote
the ideal $\varphi^*(I)$ by $J$. Then,
for any $t\in\Z_{\geq 1}$, we have
\begin{align*}
t\DP\left(\ini(J^t)\right)&=
t\frac{1}{e_1(\ini(J^t))}+t\frac{e_1(\ini(J^t))}{e_2(\ini(J^t))}+\cdots+t\frac{e_{n-1}(\ini(J^t))}{e_n(\ini(J^t))}\\
&=\frac{1}{e_1(\ini(J^t))/t}+\frac{e_1(\ini(J^t))/t}{e_2(\ini(J^t))/t^2}+\cdots+\frac{e_{n-1}(\ini(J^t))/t^{n-1}}{e_n(\ini(J^t))/t^n}
\end{align*}
By Theorem \ref{mustatagin} and the definition of $\DP(I)$ we immediately obtain the desired result.
\end{proof}

\begin{rem}\label{resolem}
Let $I\subseteq \O_n$ be any ideal of finite colength.
Let $\varphi:\C^n\to \C^n$ be a linear change of coordinates such that relation (\ref{nadal2015}) holds
for $\varphi$ and $I$. As a direct consequence
of \cite[\S 3.1]{DP} we obtain that $\DP(K)\leq \lct(K)$, for any
monomial ideal $K$ of $\O_n$ of finite colength.
Then
$$
\DP\left(\ini(\varphi^*(I)^t)\right)\leq \lct\left(\ini(\varphi^*(I)^t)\right)
$$
for all $t\in\Z_{\geq 1}$. In particular
\begin{equation}\label{tlctin}
t\DP\left(\ini(\varphi^*(I)^t)\right)\leq t\,\lct\left(\ini(\varphi^*(I)^t)\right)\leq t\lct(\varphi^*(I)^t)=\lct(I)
\end{equation}
for all $t\in \Z_{\geq 1}$, were the second inequality follows from Proposition \ref{semicont}.
Therefore, taking limits when $t\to \infty$ in all parts of the previous inequalities,
we obtain
\begin{equation}\label{DPcentral}
\DP(I)\leq \lct(I)
\end{equation}
as a consequence of Corollary \ref{limDPs}. Then we have shown an alternative approach to the
deduction of (\ref{DPcentral}) as a corollary of the analogous inequality for monomial ideals.

To the best of our knowledge, the proof of (\ref{DPcentral}) as a corollary of the analogous result for monomial ideals
explained in \cite[\S 3.3]{DP} relies on the equality $e_j(I)=e_j(\ini(I))$, for all $j=1,\dots, n$, where $\ini(I)$ denotes the initial
ideal of $I$ with respect to any monomial order. However, as shown in Example \ref{surprise}, the ideals
$I$ and $\ini(I)$ do not have the same set of mixed multiplicities in general.

Let us also point out that if $\DP(I)=\lct(I)$, then relation (\ref{tlctin}) and Corollary \ref{limDPs} show that
$$
\lct(I)=\lim_{t\to +\infty}t \lct\left(\ini(\varphi^*(I)^t)\right).
$$

\end{rem}

\section{Mixed multiplicities and diagonal ideals}

Let us fix along the remaining text a coordinate system $x_1,\dots,x_n$ in $\C^n$, unless otherwise stated.
Let $I$ be an ideal of $\O_n$. We denote the integral closure of $I$ by $\overline I$ and the Newton polyhedron of $I$ by $\Gamma_+(I)$.
Let us recall that $\Gamma_+(I)$ is the smallest convex set of $\R^n_+$ containing the supports of the elements of $I$.
Therefore $\Gamma_+(I)$ is equal to the convex hull of the set $\{k+v: k\in\supp(f),\,f\in I,\,v\in\R^n_{\geq 0}\}$.
In general it holds that $\Gamma_+(I)=\Gamma_+(\overline I)$ (see \cite[p.\,399]{CBAJoint}).
If $I$ admits a generating system formed by monomials, then we say that $I$ is a {\it monomial ideal}.

We define the {\it term ideal} of $I$ as the ideal generated by all the monomials $x^k$ such that $k\in\Gamma_+(I)$.
We will denote this ideal by $I^0$. If $I$ is a monomial ideal, then $\overline I$ is also monomial and therefore $\overline I=I^0$
(see \cite[p.\,11]{HS} or \cite{McZ});
however the converse is not true,
as is shown by the ideal $I$ of $\O_2$ given by $I=\langle x^2+y^2, xy\rangle$.
The ideals $I$ for which $\overline I$ is generated by monomials are called {\it Newton non-degenerate ideals} (see \cite{CBAJoint} or \cite{Tessiermbp}).

\begin{defn}\label{elsdiagonals}
Let $I$ be an ideal of $\O_n$. We say that $I$ is {\it diagonal} when there exist positive integers $a_1,\dots, a_n$ such that
$\overline I=\overline{\langle x_1^{a_1},\dots, x_n^{a_n}\rangle}$.
\end{defn}

Then any power of the maximal ideal of $\O_n$ is a diagonal ideal. Moreover, any diagonal ideal is Newton non-degenerate.
As a consequence of the previous definition, if $I$ is diagonal then $I^0$ is also, but the converse is not true, as is shown by the
ideal of $\O_2$ given by $I=\langle x+y, x^2 \rangle\subseteq\O_2$ (in this case $I^0$ is equal to the maximal ideal).

Let $I$ be an ideal of $\O_n$ of finite colength. Then by virtue of (\ref{DPcentral}) and the inclusion $I\subseteq I^0$ we have the inequalities
\begin{equation}\label{meollo2}
\DP(I)\leq \lct(I)\leq \lct(I^0).
\end{equation}
We recall the following result of Howald \cite{Howald} (see also \cite{McZ}), where $\lct(I)$ is characterized in terms of a combinatorial
characteristic of $\Gamma_+(I)$ if $I$ is
a monomial ideal.

\begin{thm}\label{resHowald}\cite{Howald}
Let $I$ be a monomial ideal of $\O_n$. Then
$$
\lct(I)=\frac{1}{\min\big\{\mu>0: \mu(1,\dots, 1)\in\Gamma_+(I)\big\}}.
$$
\end{thm}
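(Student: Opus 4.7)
The plan is to compute $\lct(I)$ directly from the integrability definition by converting to a convex-geometric problem via a logarithmic substitution, and then matching the resulting critical exponent with $1/\mu^*$ by a supporting-hyperplane argument, where $\mu^*=\min\{\mu>0:\mu(1,\dots,1)\in\Gamma_+(I)\}$.

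Concretely, I would take monomial generators $x^{\alpha_1},\dots,x^{\alpha_r}$ of $I$, where the $\alpha_j$ may be chosen as the vertices of $\Gamma_+(I)$. Passing to polar coordinates $x_i=r_ie^{i\theta_i}$, integrating out the angles, and substituting $u_i=-\log r_i$, the integral
$$
\int_{|x_i|<1}\Bigl(\sum_{j=1}^r|x^{\alpha_j}|^2\Bigr)^{-s}\,dV
$$
becomes, up to a positive constant,
$$
\int_{[0,\infty)^n}\Bigl(\sum_{j=1}^re^{-2\langle\alpha_j,u\rangle}\Bigr)^{-s}e^{-2(u_1+\cdots+u_n)}\,du.
$$
The elementary sandwich $e^{-2m(u)}\leq\sum_je^{-2\langle\alpha_j,u\rangle}\leq r\,e^{-2m(u)}$, with $m(u)=\min_j\langle\alpha_j,u\rangle$, reduces convergence to that of $\int_{\R^n_{\geq 0}}e^{-2g_s(u)}\,du$, where $g_s(u)=u_1+\cdots+u_n-s\,m(u)$.

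Since $g_s$ is positively homogeneous of degree one and piecewise linear, a radial decomposition shows that this latter integral converges iff $g_s>0$ on the simplex $\Delta=\{u\geq 0:\sum u_i=1\}$. At points of $\Delta$ where $m(u)=0$ this is automatic, so the effective condition is $s<\sum u_i/m(u)$ for every $u\in\Delta$ with $m(u)>0$, whence
$$
\lct(I)=\inf\left\{\frac{u_1+\cdots+u_n}{m(u)}:u\in\R^n_{\geq 0},\ m(u)>0\right\}.
$$
The identification with $1/\mu^*$ is then purely combinatorial. For every $u\geq 0$ one has $m(u)=\inf_{\alpha\in\Gamma_+(I)}\langle u,\alpha\rangle$, so evaluating at $\alpha=\mu^*(1,\dots,1)\in\Gamma_+(I)$ yields $m(u)\leq\mu^*(u_1+\cdots+u_n)$, giving $\lct(I)\geq 1/\mu^*$. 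Conversely, $\mu^*(1,\dots,1)\in\partial\Gamma_+(I)$, so there is a nonzero $v\in\R^n_{\geq 0}$ defining a supporting hyperplane through this point (the nonnegativity of $v$ comes from the invariance $\Gamma_+(I)+\R^n_{\geq 0}=\Gamma_+(I)$, which forces every outer normal of a bounded face to be nonnegative), and for this $v$ the equality $m(v)=\mu^*(v_1+\cdots+v_n)$ holds, producing the reverse inequality.

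The main obstacle I anticipate is the boundary analysis on $\Delta$: one has to check that the strata where $m$ vanishes do not cause the $u$-integral to diverge and, symmetrically, that the supporting normal of $\Gamma_+(I)$ at the diagonal point can always be chosen nonnegative. Both points follow from the Newton-polyhedron structure of $\Gamma_+(I)$, i.e.\ its invariance under translation by the nonnegative orthant, so once this structural remark is in place the convex-analytic and integration steps assemble into the desired formula.
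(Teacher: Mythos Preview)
The paper does not prove this statement at all: Theorem~\ref{resHowald} is quoted from Howald's paper \cite{Howald} (with an additional pointer to the analytic treatment in \cite{McZ}) and used as a black box. So there is no ``paper's own proof'' to compare your attempt against.

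That said, your proposal is a correct self-contained argument and is genuinely different from Howald's original, which computes the multiplier ideals $\mathcal J(I^c)$ of a monomial ideal via a toric log resolution and reads off $\lct(I)$ as the first $c$ for which $\mathcal J(I^c)\neq\O_n$. Your route is the direct analytic one: the logarithmic change of variables turns local integrability into the positivity of the piecewise-linear, degree-one homogeneous function $g_s(u)=\langle \e,u\rangle-s\,m(u)$ on the standard simplex, and then Legendre-type duality between $\sup_{u}m(u)/\langle\e,u\rangle$ and the first hitting time of the diagonal with $\Gamma_+(I)$ gives the formula. This is essentially the approach of McNeal--Zeytuncu \cite{McZ}. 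The advantages are complementary: Howald's argument yields the full multiplier-ideal description $\mathcal J(I^c)=\langle x^k:k+\e\in\mathrm{int}(c\,\Gamma_+(I))\rangle$, of which the $\lct$ formula is a corollary, while your argument is elementary, avoids resolutions entirely, and makes the role of $\Gamma_+(I)$ transparent. The two points you flag as potential obstacles are harmless: on the strata where $m(u)=0$ one has $g_s(u)=\langle\e,u\rangle>0$ automatically, and the existence of a nonnegative supporting covector at $\mu^*\e$ is exactly the ``$\Gamma_+(I)+\R^n_{\geq 0}=\Gamma_+(I)$'' invariance you already invoke.
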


\begin{prop}\label{propxula0}
Let $D\subseteq \R^n_{>0}$ be the set defined by
\begin{equation}\label{Dxulo}
D=\{(t_1,\dots, t_n)\in\,\R^n_{>0}: t_1^2\leq t_2,\, t_j^2\leq t_{j-1}t_{j+1},\,\textnormal{for all $j=2,\dots, n-1$}\}.
\end{equation}
Let us consider the function $f:\R^n_{>0}\to \R$ given by
$$
f(t_1,\dots, t_n)=\frac{1}{t_1}+\frac{t_1}{t_2}+\dots \frac{t_{n-1}}{t_n},
$$
for all $(t_1,\dots, t_n)\in \R^n_{>0}$. Let $a=(a_1,\dots, a_n), b=(b_1,\dots, b_n)\in D$ such that
$a_i\leq b_i$, for all $i=1,\dots, n$.  Then $f(a)\geq f(b)$ and equality holds
 only if and only if $a=b$.
\end{prop}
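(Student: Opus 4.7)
The plan is to prove the inequality via a calculus argument along a path from $a$ to $b$ that stays inside $D$. A direct computation gives
\[
\frac{\partial f}{\partial t_1}=-\frac{1}{t_1^2}+\frac{1}{t_2},\quad \frac{\partial f}{\partial t_j}=-\frac{t_{j-1}}{t_j^2}+\frac{1}{t_{j+1}}\ (1<j<n),\quad \frac{\partial f}{\partial t_n}=-\frac{t_{n-1}}{t_n^2},
\]
and the defining conditions of $D$ force $\partial f/\partial t_j\leq 0$ at every point of $D$, while $\partial f/\partial t_n<0$ without any additional hypothesis.

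The affine segment from $a$ to $b$ need not stay in $D$, since the inequalities $t_j^2\leq t_{j-1}t_{j+1}$ are not convex in the $t_i$'s. This is the main obstacle, and I would resolve it by passing to logarithmic coordinates $u_j=\log t_j$ (with the convention $u_0=0$): the defining conditions then become the linear inequalities $2u_j\leq u_{j-1}+u_{j+1}$, so $D$ is convex in $u$. I would therefore use the geometric interpolation $\gamma_j(s)=a_j^{1-s}b_j^{s}$, $s\in[0,1]$, which stays in $D$ and satisfies $\dot\gamma_j(s)=\gamma_j(s)\log(b_j/a_j)\geq 0$. Then
\[
f(b)-f(a)=\int_0^1 \sum_{j=1}^n \frac{\partial f}{\partial t_j}\bigg|_{\gamma(s)}\gamma_j(s)\log\frac{b_j}{a_j}\,ds\leq 0,
\]
each summand being the product of a non-positive and a non-negative factor.

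For the equality case, if $f(a)=f(b)$ the continuity of the integrand forces each summand to vanish identically on $[0,1]$. The $j=n$ term immediately gives $a_n=b_n$, since $\partial f/\partial t_n<0$ strictly. Assuming $a\neq b$ and letting $k$ be the largest index with $a_k<b_k$, the vanishing of the $j=k$ summand makes $\partial f/\partial t_k\equiv 0$ along $\gamma$, a condition linear in $s$ in log coordinates and hence equivalent to $a_k^2=a_{k-1}a_{k+1}$ together with $b_k^2=b_{k-1}b_{k+1}$. With $a_{k+1}=b_{k+1}$ this forces $a_{k-1}<b_{k-1}$, and iterating down to $j=1$ (where $t_0=1$ closes off the recursion) yields $a_j=a_1^{j}$ and $b_j=b_1^{j}$ for $j=1,\dots,k+1$; then $a_{k+1}=b_{k+1}$ forces $a_1=b_1$, contradicting $a_k<b_k$. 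Once the log-coordinate observation has been made, the rest is one-variable calculus and elementary bookkeeping.
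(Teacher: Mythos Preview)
Your argument is correct, but one assertion along the way is false: the affine segment from $a$ to $b$ \emph{does} stay in $D$. Each set $D_j=\{t\in\R^n_{>0}:t_j^2\le t_{j-1}t_{j+1}\}$ is convex, since the defining inequality is precisely the positive semidefiniteness of the matrix $\left(\begin{smallmatrix}t_{j-1}&t_j\\ t_j&t_{j+1}\end{smallmatrix}\right)$; the paper establishes this convexity directly via the Cauchy--Schwarz inequality and then differentiates $g(\lambda)=f(a+\lambda(b-a))$ along the straight segment. Your misconception does no damage, however, because the geometric path $\gamma_j(s)=a_j^{1-s}b_j^{s}$ also lies in $D$ for the reason you give, and the remainder of your argument is valid.

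The substantive difference from the paper is in the equality analysis. The paper applies the Mean Value Theorem to locate a single $\lambda_0$ with $g'(\lambda_0)=0$, and then unwinds the equality case of Cauchy--Schwarz at the point $c_0=a+\lambda_0(b-a)$ to obtain the relations $a_j^2=a_{j-1}a_{j+1}$ and $b_j^2=b_{j-1}b_{j+1}$, propagating $a_j=b_j$ downward from $j=n$. You instead use that a vanishing integral of a continuous nonpositive integrand forces each summand to vanish identically on $[0,1]$, and then exploit the fact that the condition $\gamma_k(s)^2=\gamma_{k-1}(s)\gamma_{k+1}(s)$ is affine in $s$ after taking logarithms, so it holds at the endpoints $s=0,1$. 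Your route avoids Cauchy--Schwarz altogether and makes the recursion leading to $a_j=a_1^{\,j}$, $b_j=b_1^{\,j}$ a bit more transparent; the paper's route stays in linear coordinates throughout but leans on the Cauchy--Schwarz equality case at each step of the descent.
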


\begin{proof}
Let us see first that $D$ is convex.
For all $j=1,\dots, n$,
we define $D_j=\{(t_1,\dots, t_n)\in\,\R^n_{>0}: t_j^2\leq t_{j-1}t_{j+1}\}$, where we set $t_0=1$, for all $t=(t_1,\dots, t_n)\in\R^n_{>0}$.
Then it suffices to see that $D_j$ is convex, for all $j=1,\dots, n$, since $D=D_1\cap \cdots \cap D_n$.

Let us fix an index $j\in\{1,\dots, n\}$. Let $s=(s_1,\dots, s_n)$ and $t=(t_1,\dots, t_n)$ be elements of $D_j$ and let $\lambda\in[0,1]$.
We define $\bfu=(\sqrt{\lambda s_{j-1}}, \sqrt{(1-\lambda) t_{j-1}})$ and $\mathbf v=(\sqrt{\lambda s_{j+1}}, \sqrt{(1-\lambda) t_{j+1}})$.
Let us denote by $\bfu\cdot\bfv$ the usual scalar product of $\bfu$ and $\bfv$. By
applying the definition of $D_j$ and the Cauchy-Schwarz inequality we find that
\begin{align}
(\lambda s_j+(1-\lambda)t_j )^2&\leq \left(\lambda \sqrt{s_{j-1}}\sqrt{s_{j+1}}+(1-\lambda)\sqrt{t_{j-1}}\sqrt{t_{j+1}}\right )^2    \label{CS1}\\
&= \left( \bfu\cdot\bfv\right)^2\leq \Vert\bfu\Vert^2\Vert \bfv\Vert^2=(\lambda s_{j-1}+(1-\lambda)t_{j-1} )(\lambda s_{j+1}+(1-\lambda)t_{j+1} ).\label{CS2}
\end{align}
Then $\lambda s+(1-\lambda)t\in D_j$, for all $\lambda\in [0,1]$, and hence $D_j$ is convex. Therefore $D$ is convex.

 The proof of the inequality $f(a)\geq f(b)$ is contained in the
proof of \cite[Lemma 3.1]{DP}, however we reproduce it for the sake of completeness and for its implications
in the proof of the second part of the result.

Let us consider the
function $g:[0,1]\to \R_{>0}$ defined by $g(\lambda)=f(a+\lambda (b-a))$, for all $\lambda\in [0,1]$.
We observe that
\begin{equation}\label{pardef}
\frac{\partial f}{\partial t_1}(t)=-\frac{1}{t_1^2},\hspace{1cm}
\frac{\partial f}{\partial t_j}(t)=-\frac{t_{j-1}}{t_j^2}+\frac{1}{t_{j+1}}, \hspace{1cm}
\frac{\partial f}{\partial t_n}(t)=-\frac{t_{n-1}}{t_n^2}
\end{equation}
for all $t\in \R^n_{>0}$ and all $j=2,\dots, n-1$. In particular, we have $\frac{\partial f}{\partial t_j}(t)\leq 0$,
for all $t\in D$ and all $j=1,\dots, n$. Then
\begin{equation}\label{lag}
g'(\lambda)=\sum_{j=1}^n\left(\frac{\partial f}{\partial t_j}(a+\lambda(b-a))\right)(b_j-a_j)\leq 0
\end{equation}
for all $\lambda\in \,]0,1[$. Hence $g$ is a decreasing function, which implies that $f(a)\geq f(b)$.

Let us suppose that $f(a)=f(b)$, which means that $g(0)=g(1)$.
Then there exists some $\l0\in\,]0,1[$ such that $g'(\l0)=0$, by the Mean Value Theorem.
Let $c_0=a+\lambda_0(b-a)\in D$.
By
(\ref{lag}) and the fact that $\frac{\partial f}{\partial t_j}(t)\leq 0$,
for all $t\in D$ and all $j=1,\dots, n$, we conclude that
\begin{equation}\label{pardef2}
\frac{\partial f}{\partial t_1}(c_0)(b_1-a_1)=0,\hspace{1cm}
\frac{\partial f}{\partial t_j}(c_0)(b_j-a_j)=0,\hspace{1cm}
\frac{\partial f}{\partial t_n}(c_0)(b_n-a_n)=0
\end{equation}
for all $j=2,\dots, n-1$.

Let us suppose that $a_n\neq b_n$. Then (\ref{pardef2}) implies that
$$
\frac{\partial f}{\partial t_n}(a+\l0(b-a))(a_n-b_n)=
-\frac{a_{n-1}+\l0(b_{n-1}-a_{n-1})}{(a_{n}+\l0(b_{n}-a_{n}))^2}(b_n-a_n)=0.
$$
Then $\lambda=-b_{n-1}/(a_{n-1}-b_{n-1})$, which contradicts the hypothesis that $\lambda\in\,]0,1[$. Therefore
$a_n=b_n$.

If we assume that $a_{n-1}\neq b_{n-1}$, by (\ref{pardef}) and (\ref{pardef2}), we conclude that
\begin{equation}\label{eql0}
(\lambda_0 b_{n-1} +(1-\l0)a_{n-1})^2=(\lambda_0 b_{n-2} +(1-\l0)a_{n-2})(\lambda_0 b_{n} +(1-\l0)a_{n}).
\end{equation}
We observe that, by inequality (\ref{CS1}), this condition can not hold if $a_j^2<a_{j-1}a_{j+1}$ or $b_j^2<b_{j-1}b_{j+1}$. So
(\ref{eql0}) forces that
\begin{equation}\label{eqfr}
a_{n-1}^2=a_{n-2}a_{n} \hspace{0.5cm}\textnormal{and}\hspace{0.5cm} b_{n-1}^2=b_{n-2}b_{n}.
\end{equation}

By (\ref{CS1}), (\ref{CS2}) and the characterization of equality in the Cauchy-Schwarz inequality, condition (\ref{eql0}) is equivalent to saying that
$$
\frac{\sqrt{(1-\l0)a_{n}}}{\sqrt{(1-\l0)a_{n-2}}}=\frac{\sqrt{(1-\l0)b_{n}}}{\sqrt{(1-\l0)b_{n-2}}},
$$
which in turn is equivalent to saying that
$
{a_{n}}/{a_{n-2}}={b_{n}}/{b_{n-2}}.
$
Then, since $a_n=b_n$, we obtain that $a_{n-2}=b_{n-2}$. Hence $a_{n-1}=b_{n-1}$, by (\ref{eqfr}), and thus we arrive to a contradiction.
The remaining equalities $a_j=b_j$, for all $j=1,\dots, n-2$, follow analogously. Then the result is proven.
\end{proof}


\begin{cor}\label{propxula}
Let $R$ be a Noetherian local ring. Let us suppose that $R$ is quasi-unmixed.
Let $I_1,I_2$ be two ideals of finite colength of $R$ such that $I_1\subseteq I_2$. Then
\begin{equation}\label{esdeDP}
\DP(I_1)\leq\DP(I_2)
\end{equation}
and equality holds if and only if $\overline{I_1}=\overline{I_2}$.
\end{cor}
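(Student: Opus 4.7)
The plan is to reduce the statement to Proposition \ref{propxula0} applied to the vectors of mixed multiplicities of $I_1$ and $I_2$, and then use Rees' Multiplicity Theorem to recover the integral closure in the equality case.

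First I would check that for any ideal $I$ of finite colength in a Noetherian local ring, the vector of mixed multiplicities
$\bigl(e_1(I),\dots,e_n(I)\bigr)$
belongs to the region $D$ of Proposition \ref{propxula0}. Indeed, rewriting the chain of inequalities (\ref{vivaHS}) as
$$
e_1(I)^2\leq e_2(I),\qquad e_j(I)^2\leq e_{j-1}(I)\,e_{j+1}(I)\ \text{ for } j=2,\dots,n-1,
$$
shows precisely that the point lies in $D$. Note also that $\DP(I)=f(e_1(I),\dots,e_n(I))$, where $f$ is the function introduced in Proposition \ref{propxula0}.

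Next I would invoke the monotonicity of mixed multiplicities under inclusion of finite-colength ideals: from $I_1\subseteq I_2$ one obtains $e_j(I_1)\geq e_j(I_2)$ for every $j=1,\dots,n$ (this is a standard fact, see e.g.\ \cite[\S 17]{HS}). Setting $a=(e_1(I_2),\dots,e_n(I_2))$ and $b=(e_1(I_1),\dots,e_n(I_1))$, both points lie in $D$ and $a_i\leq b_i$ for every $i$. Proposition \ref{propxula0} then yields $f(a)\geq f(b)$, that is $\DP(I_2)\geq \DP(I_1)$, which is (\ref{esdeDP}).

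For the equality case, Proposition \ref{propxula0} tells us that $\DP(I_1)=\DP(I_2)$ forces $a=b$, i.e.\ $e_j(I_1)=e_j(I_2)$ for every $j=1,\dots,n$. In particular $e(I_1)=e_n(I_1)=e_n(I_2)=e(I_2)$. Since $R$ is quasi-unmixed and $I_1\subseteq I_2$, Rees' Multiplicity Theorem (see \cite[p.\,222]{HS}) then gives $\overline{I_1}=\overline{I_2}$. Conversely, if $\overline{I_1}=\overline{I_2}$, the mixed multiplicities depend only on the integral closure, so $e_j(I_1)=e_j(I_2)$ for all $j$ and hence $\DP(I_1)=\DP(I_2)$. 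The main conceptual point is the strict monotonicity of $f$ on $D$ provided by Proposition \ref{propxula0}, which does the real work; the only external ingredient needed is the classical Rees Theorem to translate the resulting multiplicity equalities into an integral-closure equality.
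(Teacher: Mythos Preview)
Your proposal is correct and follows essentially the same approach as the paper's proof: both set $a=(e_1(I_2),\dots,e_n(I_2))$, $b=(e_1(I_1),\dots,e_n(I_1))$, use (\ref{vivaHS}) to place them in $D$, apply Proposition~\ref{propxula0} for the inequality and its equality characterization, and finish with Rees' Multiplicity Theorem. The only cosmetic difference is that the paper treats the converse (equal integral closures $\Rightarrow$ equal $\DP$) first, whereas you place it at the end.
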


\begin{proof}
If $\overline{I_1}=\overline{I_2}$, then $e_i(I_1)=e_i(I_2)$, for all $i=1,\dots, n$ (see \cite[\S 17.4]{HS} or \cite[p.\,306]{Cargese}) and hence $\DP(I_1)=\DP(I_2)$.
Let $D$ be the set defined in Proposition \ref{propxula0}.
Let us consider the vectors
\begin{equation}\label{aib}
a=(e_1(I_2),\dots, e_n(I_2)),\hspace{0.5cm}   b=(e_1(I_1),\dots, e_n(I_1)).
\end{equation}
Since $I_1\subseteq I_2$, then $e_i(I_2)\leq e_i(I_1)$, for all $i=1,\dots, n$.
Moreover, the vectors $a$ and $b$ defined in (\ref{aib}) belong to $D$, by (\ref{vivaHS}).
Then we can apply Proposition \ref{propxula0} to deduce that $\DP(I_1)\leq \DP(I_2)$ and
equality holds if and only if $e_i(I_1)=e_i(I_2)$, for all $i=1,\dots, n$. In particular
$\DP(I_1)= \DP(I_2)$ implies $e(I_1)=e(I_2)$. The equality $e(I_1)=e(I_2)$ together with
the inclusion $I_1\subseteq I_2$ implies that
$\overline{I_1}=\overline{I_2}$ by the Rees' Multiplicity Theorem \cite[p.\,222]{HS}.
\end{proof}

If $f\in \O_n$, then we denote by $J(f)$ the ideal of $\O_n$ generated by $\frac{\partial f}{\partial x_1},
\dots, \frac{\partial f}{\partial x_n}$. Let us suppose that $f$ has an isolated singularity at the origin, that is, the ideal $J(f)$
has finite colength in $\O_n$. Let $\mu^*(f)$ denote the vector $(\mu^{(1)}(f),\dots,\mu^{(n)}(f))$, where
$\mu^{(i)}(f_t)$ denotes the Milnor number of the restriction of $f_t$ to a generic plane of dimension $i$ in $\C^n$ passing through the origin,
for all $i=1,\dots, n$ (see \cite[\S\,1]{Cargese}).

We say that a given property
$(P_t)$ holds for all $\vert t\vert \ll 1$ if there exists
an open ball $U$ centered at $0$ in $\C$ such that the property $(P_t)$ holds whenever $t\in U$.

\begin{cor}
Let $f_t:(\C^n,0)\to (\C,0)$ be an analytic deformation such that $f_t$ has an isolated
singularity at the origin, for all $\vert t\vert \ll 1$.
Then
\begin{enumerate}
\item $\DP(J(f_t))$ is lower semicontinuous, that is, $\DP(J(f_0))\leq\DP(J(f_t))$, for all $\vert t\vert \ll 1$
\item $\DP(J(f_t))$ is constant, for $\vert t\vert \ll 1$, if and only if $\mu^*(f_t)$ is constant, for $\vert t\vert \ll 1$.
\end{enumerate}
\end{cor}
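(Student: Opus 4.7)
The plan is to reduce the statement to a direct application of Proposition \ref{propxula0} once the mixed multiplicities of the Jacobian ideal are reinterpreted as polar multiplicities of $f_t$. Recall Teissier's fundamental identity (see \cite[\S\,1]{Cargese}): for an isolated hypersurface singularity one has
$$
\mu^{(i)}(f_t)=e_i(J(f_t)),\qquad i=1,\dots,n.
$$
In particular,
$$
\DP(J(f_t))=\frac{1}{\mu^{(1)}(f_t)}+\frac{\mu^{(1)}(f_t)}{\mu^{(2)}(f_t)}+\cdots+\frac{\mu^{(n-1)}(f_t)}{\mu^{(n)}(f_t)},
$$
so $\DP(J(f_t))$ is a function of $\mu^*(f_t)$, proving at once one direction of part (2): constancy of $\mu^*(f_t)$ implies constancy of $\DP(J(f_t))$.

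For the remaining assertions, invoke the classical upper semicontinuity of each polar multiplicity under deformation (Teissier; see \cite[\S\,1]{Cargese}), which yields $\mu^{(i)}(f_t)\leq \mu^{(i)}(f_0)$, hence $e_i(J(f_t))\leq e_i(J(f_0))$ for every $i=1,\dots,n$ and every $\vert t\vert\ll 1$. Set
$$
a=(e_1(J(f_t)),\dots,e_n(J(f_t))),\qquad b=(e_1(J(f_0)),\dots,e_n(J(f_0))).
$$
By the chain of inequalities (\ref{vivaHS}), both $a$ and $b$ belong to the convex set $D$ of Proposition \ref{propxula0}, and by the previous step $a_i\leq b_i$ for every $i$. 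Proposition \ref{propxula0} then gives
$$
\DP(J(f_t))=f(a)\geq f(b)=\DP(J(f_0)),
$$
which is exactly the lower semicontinuity assertion of (1).

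Finally, for the nontrivial direction of (2), suppose that $\DP(J(f_t))$ is constant for $\vert t\vert\ll 1$. Then $f(a)=f(b)$ with $a_i\leq b_i$ and $a,b\in D$, so the equality clause of Proposition \ref{propxula0} forces $a=b$, i.e.\ $e_i(J(f_t))=e_i(J(f_0))$ for all $i$ and all $\vert t\vert\ll 1$. Via the Teissier identity this is precisely the constancy of $\mu^*(f_t)$, finishing the proof. The only nontrivial input is the invocation of the two classical results of Teissier quoted above; once these are in place, the statement is a transparent consequence of Proposition \ref{propxula0}, and I do not expect any genuine obstacle beyond locating the right references.
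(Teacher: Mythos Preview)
Your proof is correct and follows essentially the same approach as the paper: invoke Teissier's identity $\mu^{(i)}(f_t)=e_i(J(f_t))$, use upper semicontinuity of the $\mu^{(i)}$ to get $e_i(J(f_t))\leq e_i(J(f_0))$, and then apply Proposition \ref{propxula0} (together with (\ref{vivaHS}) to ensure membership in $D$) both for the inequality and for the equality characterization. The only cosmetic differences are that the paper cites \cite[Theorem 2.6]{GLS} for the semicontinuity, and that you spell out the application of Proposition \ref{propxula0} in more detail; be mindful also that you are overloading the symbol $f$ for both the deformation and the function of Proposition \ref{propxula0}.
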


\begin{proof}
By the results of Teissier in \cite[\S 1]{Cargese}, it is well known that $\mu^{(i)}(f_t)=e_i(J(f_t))$, where $\mu^{(i)}(f_t)$ denotes the
Milnor number of the restriction of $f_t$ to a generic plane of dimension $i$ in $\C^n$ passing through the origin. Since Milnor numbers
are upper semicontinuous (see \cite[Theorem 2.6]{GLS}), we conclude that $e_i(J(f_t))\leq e_i(J(f_0))$, for all $i=1,\dots, n$. Then both items of the result follow as an immediate
consequence of Proposition \ref{propxula0}.
\end{proof}

%

\begin{thm}\label{resprinc}
Let $I$ be an ideal of $\O_n$ of finite colength.  Then the following conditions are equivalent:
\begin{enumerate}
\item[(a)] $I$ is diagonal.
\item[(b)] $\lct(I^0)=\DP(I)$.
\end{enumerate}
\end{thm}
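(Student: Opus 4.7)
The plan is as follows. For (a) $\Rightarrow$ (b) I reduce to an explicit identity for the model ideal $K = \langle x_1^{a_1}, \ldots, x_n^{a_n}\rangle$, obtained cleanly via Proposition \ref{propxula0}. For (b) $\Rightarrow$ (a) I reduce to the monomial case using Corollary \ref{propxula}, use Howald's theorem (Theorem \ref{resHowald}) to extract a compact facet of $\Gamma_+(I)$ through the diagonal point $\mu_*(1,\ldots,1)$, and apply Corollary \ref{propxula} once more after scaling to show that this facet exhausts $\Gamma_+(I)$.

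For (a) $\Rightarrow$ (b): under the hypothesis $\overline I = \overline K$ one has $\Gamma_+(I) = \Gamma_+(K)$, so $I^0 = K^0 = \overline K$ and Theorem \ref{resHowald} gives $\lct(I^0) = \lct(K) = \sum_i 1/a_i$. Since $\DP$ depends only on the integral closure, $\DP(I) = \DP(K)$, and it remains to show $\DP(K) = \sum_i 1/a_i$. Specializing the generic linear forms in (\ref{glforms}) to the coordinate projections $x_{j+1}, \ldots, x_n$, upper semicontinuity of the Samuel multiplicity combined with the permutation symmetry of $K$ yields $e_j(K) \leq a_{(1)}\cdots a_{(j)}$, where $a_{(1)} \leq \cdots \leq a_{(n)}$ is the reordering of the $a_i$. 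Both vectors $a = (e_1(K), \ldots, e_n(K))$ and $b = (a_{(1)}, a_{(1)} a_{(2)}, \ldots, a_{(1)} \cdots a_{(n)})$ lie in the set $D$ of Proposition \ref{propxula0} (the first by (\ref{vivaHS}), the second because $(a_{(j)})$ is non-decreasing), and $a_i \leq b_i$; hence $\DP(K) = f(a) \geq f(b) = \sum_i 1/a_i$ by Proposition \ref{propxula0}. Combined with the Demailly-Pham inequality $\DP(K) \leq \lct(K) = \sum_i 1/a_i$, this yields $f(a) = f(b)$, and the equality case of Proposition \ref{propxula0} forces $a = b$, so that $e_j(K) = a_{(1)} \cdots a_{(j)}$ and $\DP(K) = \sum_i 1/a_i = \lct(I^0)$.

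For (b) $\Rightarrow$ (a): combining (\ref{meollo2}) with the bound $\DP(I) \leq \DP(I^0)$ from Corollary \ref{propxula} (applied to $I \subseteq I^0$) and the Demailly-Pham inequality $\DP(I^0) \leq \lct(I^0)$ for the monomial ideal $I^0$, one obtains the chain $\DP(I) \leq \DP(I^0) \leq \lct(I^0) = \DP(I)$, so that $\DP(I) = \DP(I^0)$ and Corollary \ref{propxula} forces $\overline I = \overline{I^0} = I^0$. Setting $J := I^0$, the remaining task is to show that a monomial ideal $J$ of finite colength with $\DP(J) = \lct(J)$ is diagonal. By Howald's theorem, $\mu_*(1,\ldots,1)$ with $\mu_* = 1/\lct(J)$ lies on a compact facet $F$ of $\Gamma_+(J)$; its supporting equation takes the form $\sum u_i/b_i = 1$ for certain $b_i \in \Q_{>0}$, and $\lct(J) = \sum_i 1/b_i$. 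Choose $m \in \Z_{\geq 1}$ with $c_i := m b_i \in \Z$ and set $K = \langle x_1^{c_1}, \ldots, x_n^{c_n}\rangle$. The facet inclusion $\Gamma_+(J) \subseteq \{u \geq 0 : \sum u_i/b_i \geq 1\}$ scales by $m$ to $\Gamma_+(J^m) = m\Gamma_+(J) \subseteq \Gamma_+(K)$, so $J^m \subseteq K$. The scaling $e_i(J^m) = m^i e_i(J)$ combined with the forward direction gives $\DP(J^m) = \DP(J)/m = \lct(J)/m = \sum_i 1/c_i = \DP(K)$, and Corollary \ref{propxula} forces $\overline{J^m} = \overline K$. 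Hence $m\Gamma_+(J) = \Gamma_+(K)$, so $\Gamma_+(J) = \{u \geq 0: \sum u_i/b_i \geq 1\}$. The axis intercepts $b_i$ of this Newton polyhedron coincide with the integer axis intercepts of $J$, so $b_i \in \Z_{\geq 1}$ and $I$ is diagonal with exponents $b_1, \ldots, b_n$.

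The conceptual difficulty is that the facet $F$ produced by Howald need not have integer intercepts a priori, which is why the passage to $J^m$ is needed; the integrality of the final $b_i$'s is recovered afterward from the monomial, finite-colength hypothesis on $J$. In the forward direction, the specialization of linear forms to coordinate hyperplanes is not generic, yielding only the inequality $e_j(K) \leq a_{(1)}\cdots a_{(j)}$, so Proposition \ref{propxula0} plays the essential role of converting it into equality.
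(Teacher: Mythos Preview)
Your argument is correct and follows essentially the same route as the paper, particularly in the substantive direction (b)$\Rightarrow$(a): reduce to the monomial case via Corollary~\ref{propxula}, pick a supporting hyperplane of $\Gamma_+(J)$ through the Howald point, clear denominators, and apply Corollary~\ref{propxula} once more to a power of $J$ to force $\Gamma_+(J)$ to be a simplex. Two small points are worth flagging. First, from $\Gamma_+(J^m)\subseteq\Gamma_+(K)$ you only get $J^m\subseteq\overline K$, not $J^m\subseteq K$; this is harmless since Corollary~\ref{propxula} applies to the pair $J^m\subseteq\overline K$ and $\DP(K)=\DP(\overline K)$, but the paper is careful to write $I^r\subseteq\overline H$ for exactly this reason. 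Second, your treatment of (a)$\Rightarrow$(b) is considerably more elaborate than necessary: the paper simply takes the formula $e_j(\langle x_1^{a_1},\dots,x_n^{a_n}\rangle)=a_{(1)}\cdots a_{(j)}$ as known (and indeed uses it without comment later, writing ``$e_i(H)=r^ic_1\cdots c_i$ since $c_1\leq\cdots\leq c_n$''), whereas you recover it indirectly by combining the one-sided bound $e_j(K)\leq a_{(1)}\cdots a_{(j)}$, Proposition~\ref{propxula0}, and the Demailly--Pham inequality. Your derivation is valid, but a direct computation of $e_j(K)$ (or a reference) would be cleaner and would also avoid the forward dependence on $\DP(K)\leq\lct(K)$.
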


\begin{proof} The implication $(\textrm{a})\Rightarrow (\textrm{b})$ is a direct consequence of Theorem \ref{resHowald} and the
known equality $\lct(I)=\lct(\overline I)$ (see \cite[\S 11.1]{L}).

Let us prove $(\textrm{b})\Rightarrow (\textrm{a})$.
Let us suppose first that $I$ is an ideal generated by monomials such that $\lct(I^0)=\DP(I)$.
Hence $\lct(I)=\frac 1\mu_0$, where
$\mu_0=\min\{\mu>0: \mu\e\in\Gamma_+(I)\}$ and $\e=(1,\dots, 1)$, by Theorem \ref{resHowald}.
Let $\pi$ denote a supporting hyperplane
of $\Gamma_+(I)$ containing the point $\mu_0\e$ and defined by the zeros of a linear form with rational coefficients.
Let us write the equation of $\pi$ as
$$
\frac{x_1}{c_1}+\dots+\frac{x_n}{c_n}=1
$$
where $c_1,\dots, c_n\in\Q_{>0}$.
If necessary, we can reorder the variables to obtain $c_1\leq \dots\leq c_n$.
Let $r$ be a positive integer such that $rc_1,\dots, rc_n\in\Z_{\geq 1}$ and let us
denote by $H$ the ideal of $\O_n$ generated by $x_1^{rc_1},\dots, x_n^{rc_n}$. Since $\pi$ is a supporting hyperplane of $\Gamma_+(I)$
passing through the point $\mu_0\bf e$, we have $I^r\subseteq \overline H$ and $\lct(I^r)=\lct(H)$.
Moreover $e_i(H)=r^ic_1\cdots c_i$, for all $i=1,\dots, n$, since $c_1\leq \cdots \leq c_n$.
Therefore
\begin{equation}\label{elsH1}
\lct(I^r)=\lct(H)=\frac{1}{rc_1}+\frac{1}{rc_2}+\cdots+\frac{1}{rc_n}=\frac{1}{e_1(H)}+\frac{e_1(H)}{e_2(H)}+\cdots +\frac{e_{n-1}(H)}{e_n(H)}=\DP(H).
\end{equation}
Since $\overline I=I^0$, we have that $\lct(I)=\DP(I)$, by hypothesis. Thus
\begin{equation}\label{elsH2}
\lct(I^r)=\frac 1r\lct(I)=\frac 1r \DP(I)=\DP(I^r),
\end{equation}
where the last equality follows from the relation $e_i(I^r)=r^ie_i(I)$, for all $i=1,\dots, n$ (see \cite[Proposition 17.5.1]{HS}).
Then (\ref{elsH1}) and (\ref{elsH2}) show that $\DP(I^r)=\DP(H)$ and, by Corollary \ref{propxula}, we obtain that $\overline{I^r}=\overline H$.
Thus $r\Gamma_+(I)=\Gamma_+(I^r)=\Gamma_+(H)$, which
implies that $\Gamma_+(I)$ has a unique compact face $\Delta$ of dimension $n-1$. Since the vertexes of $\Gamma_+(I)$ are contained in
$\Z^n_{\geq 1}$, we conclude that we can take $r=1$ and that, in this case, the hyperplane $\pi$ contains $\Delta$. Consequently
$c_i\in\Z_{\geq 1}$ and $c_i=e_i(I)/e_{i-1}(I)$, for all
$i=1,\dots, n$. Hence $\overline I=\overline{\langle x_1^{c_1},\dots, x_n^{c_n}\rangle}$, which means that $I$ is diagonal.

Let $I$ be an arbitrary ideal of $\O_n$ of finite colength such that $\lct(I^0)=\DP(I)$.
Then, by a direct application of (\ref{meollo2}) and Corollary \ref{propxula} we obtain
the following chain of inequalities
\begin{align}
\DP(I)&=\lct(I)=\lct(I^0)\geq \frac{1}{e_1(I^0)}+\frac{e_1(I^0)}{e_2(I^0)}+\cdots +\frac{e_{n-1}(I^0)}{e_n(I^0)}\label{primera}\\
&\geq \frac{1}{e_1(I)}+\frac{e_1(I)}{e_2(I)}+\cdots +\frac{e_{n-1}(I)}{e_n(I)}=\DP(I).\label{segona}
\end{align}

Hence we deduce that $\lct(I^0)=\DP(I^0)$, which implies, by the case analyzed before, that $I^0$ is a diagonal ideal.
Moreover (\ref{primera}) and (\ref{segona}) also show that $\DP(I)=\DP(I^0)$. Then $\overline I=\overline{I^0}$, by Corollary \ref{propxula},
and consequently $I$ is a diagonal ideal.
\end{proof}





\begin{rem}
{(i)} We observe that condition (b) of Theorem \ref{resprinc} is equivalent to impose the conditions
$\lct(I)=\DP(I)$ and $\lct(I)=\lct(I^0)$, by (\ref{meollo2}). In general the condition $\lct(I)=\DP(I)$ does not imply $\lct(I)=\lct(I^0)$ and
hence it does not force the ideal $I$ to be diagonal, as is shown in Example \ref{contraex}. Obviously, the condition $\lct(I)=\lct(I^0)$
holds if $\overline I$ is a monomial ideal. If $I$ is an arbitrary ideal of $\O_n$, let us denote by $K_I$ the ideal of $\O_n$
generated by all the monomials $x^k$ such that $x^k\in\overline I$. Then $\lct(K_I)\leq \lct(I)\leq \lct(I^0)$. If we suppose that
$\mu(I^0)\in\Z_{\geq 1}$ and the monomial
$(x_1\cdots x_n)^{\mu(I^0)}$ is integral over $I$, then we have $\lct(K_I)=\lct(I^0)$, by Theorem \ref{resHowald}, and then $\lct(I)=\lct(I^0)$.

{(ii)} If $I$ denotes and ideal of $\O_n$ of finite colength generated by monomials, then the equivalence between the conditions $\lct(I)=\DP(I)$ and $I$ is diagonal
also follows as a corollary of a more general result stated for multi-circled plurisubharmonic singularities and proved by
Rashkovskii in  \cite[Theorem 1.5]{RashkCRASP} following techniques from pluripotential theory.
\end{rem}

\begin{ex}\label{contraex}
Let us consider the polynomials of $\O_2$ given by $g_1=(x+y)^2+y^4$ and $g_2=(x+y)y^2$.
Let $I$ be the ideal of $\O_2$ generated by $g_1$ and $g_2$.
Then $e_1(I)=\ord(I)=2$ and $e(I)=8$.
If we apply to $I$ the linear coordinate change
$(x,y)\mapsto (x-y,y)$, then we obtain the ideal $J=\langle x^2+y^4, xy^2\rangle$.
We observe that $J$ is a Newton non-degenerate ideal (see \cite{CBAJoint} or \cite{Tessiermbp}),
which implies that $\overline J=J^0=\overline{\langle x^2, y^4\rangle}$. Then
$J$ is diagonal and hence $\lct(I)=\lct(J)=\lct(J^0)=\frac{3}{4}=\frac 12+\frac 28=\DP(I)$.

We observe that $\Gamma_+(I)$ has a unique compact face $\Delta$ of dimension $1$, hence $I$ is diagonal if and only if
$\overline I$ is generated by monomials, which is to say that $I$ is Newton non-degenerate.
Following the notation introduced in \cite[p.\,398]{CBAJoint} we see that $(g_1)_\Delta=(x+y)^2$, $(g_2)_\Delta=0$,
and hence the solutions of the system $(g_1)_\Delta=(g_2)_\Delta=0$ are not contained in $\{(x,y)\in\C^2:xy=0\}$.
Then $I$ is not Newton non-degenerate, by \cite[Proposition 3.6]{CBAJoint} and thus $I$ is not a diagonal ideal, although $\lct(I)=\DP(I)$.
\end{ex}

\vspace{0.5cm}

\noindent{\bf Acknowledgement.} This work has been supported by DGICYT Grant MTM2015-64013-P.
The author wishes to thank Professor A. Rashkovskii for his helpful comments.


\end{document}